\def\paragraph{\@startsection{paragraph}{4}%
	\z@\z@{-\fontdimen2\font}%
	{\normalfont\bfseries}}
\newcommand{\RN}[1]{%
	\textup{\uppercase\expandafter{\romannumeral#1}}%
}
\newtheorem{thm}{Theorem}[section]
\newcommand{\RR}{\mathbb{R}}      
\newcommand{\ZZ}{\mathbb{Z}}      
\newcommand{\mat}[4]{\left[\begin{smallmatrix*}[r]
		#1 & #2 \\
		#3 & #4 \\
	\end{smallmatrix*}\right]}
\def\CC{{\mathbb C}}
\def\HH{{\mathbb H}}
\def\PP{{\mathbb P}}
\def\QQ{{\mathbb Q}}
\def\RR{{\mathbb R}}
\def\ZZ{{\mathbb Z}}
\DeclareMathOperator{\Gfun}{G}
\def\<{\langle}
\def\>{\rangle}
\DeclareMathOperator{\im}{Im}
\DeclareMathOperator{\SL}{SL}
\numberwithin{equation}{section}
\begin{document}

\title{Some Interesting Connections!}
\author{Alok Shukla} 

\maketitle

One source of beauty in mathematics is totally unexpected connections between two fundamentally different objects. For instance, is it not surprising that the time period of a real simple pendulum is linked with a function arising out of finding the number of ways in which a positive integer could be decomposed as a sum of two squares? Why should inherent properties and interrelations among counting numbers should appear in the laws of nature that govern the motion of a simple pendulum? In this article we will see some such surprising and beautiful results coming from combinatorics, number theory and physics. 

\section{Alternating permutations problem}
We are familiar with trigonometric functions such as $ \tan x $ and $ \sec x $. Perhaps, there is nothing about them that can surprise us. Let's see. \\
\textbf{The problem:} Find all the permutations of $ n $ numbers such that the second number is larger than the first one, the third number is smaller than the second, the fourth larger than the third, and so on. Thus there is a up, down, up, down an alternating zig-zag  pattern.  For example, 
for $ n=3 $, we have two permutations: $ (1,3,2) $ and $ (2,3,1) $. The number of alternating permutations for $ n = 1, 3, 5,\ldots, 13 $ are as follows
$$ 1, 2, 16, 272, 7936, 353792, 22368256.  $$ 

Interestingly, in 1881 the French
mathematician D\'esir\'e  Andr\'e found (page 2, \cite{Flajolet}) that these numbers appear in the taylor series expansion of $ \tan x $:
\[ \tan x = x + \frac{2}{3!}  x^3 + \frac{16}{5!}x^5 + \frac{272}{7!}x^7 + \frac{7936}{9!}x^9 +\frac{353792}{11!}x^{11}  + \frac{22368256}{13!}x^{13} \ldots \quad . \]
\noindent \textbf{Solution (for $ \bf n = $ odd):} Let $ T_n $ denote the number of alternate permutations with $ n $ assumed to be a positive odd integer. We note that  a recursive relation could be obtained by fixing the highest node and attaching $ T_l $ and $ T_r $ permutations from the left and right respectively. We have the following configuration ($ T_l $, max, $ T_r $) that leads to the recursive relation 
\begin{equation}\label{Eq1}
T_n = \sum\limits_{k=1, k \text{ odd }}^{n-2} \binom{n-1}{k} \, T_k T_{n-1-k}.
\end{equation} 
Define the exponential generating function $ T(s) $ as
\begin{equation}\label{Eq2}
T(s) = \sum\limits_{n=1, n \text{ odd }}^{\infty} T_n \, \frac{s^n}{n!}
\end{equation}
It follows from \eqref{Eq1}  and \eqref{Eq2} that 
\begin{equation} \label{Eq3}
\frac{dT(s)}{ds} = 1 + T(s)^2
\end{equation}
It follows that $ T(s) = \tan s $.

It can be proved in a similar fashion that for even $ n $ the number of alternating permutation is connected with
$ \sec x $. \textbf{This is combinatorial trigonometry!}

\section{Gauss, AM \& GM, elliptic integrals and Jacobi theta functions}
Gauss' remarkable computational abilities and legendary skills in manipulating infinite series led him to a certain hypergeometric function and its associated second order hypergeometric differential equation. Moreover this hypergeometric function is also closely connected with a certain elliptic integral and therefore to elliptic curves and modular forms. It is instructive to follow his original arguments.

Gauss discovered the arithmetico-geometric mean (agm) when he was $ 15 $. He started with two numbers $ a $ and $ b $ and wrote $ a_1 = \dfrac{a+b}{2}$  for the arithmetic mean and $b_1 = \sqrt{ab} $   for the geometric mean of $ a $ and $ b $. He then created sequences $ \{a_n\} $ and $ \{b_n\} $  of arithmetic and geometric means by defining $ a_n = \dfrac{a_{n-1}+b_{n-1}}{2}$ and $ b_n = \sqrt{a_{n-1} b_{n-1}}$ for $ n \geq 1 $ with $ a_0 =a $ and $ b_0 =b $. It is not hard to see that the sequences $ \{a_n\} $ and $ \{b_n\} $ converge to a common limit, known as the agm of $ a $ and $ b $, which Gauss denoted by $ M(a,b) $. It is clear that $ M(\alpha a,\alpha b) = \alpha M(a,b)  $. It is also obvious that $ M(1+x,1-x) $ is an even function. Gauss assumed that its reciprocal has an infinite series expansion.
\begin{equation}
\frac{1}{M(1+x,1-x)} = \sum_{k=0}^{\infty} A_k x^{2k}.
\end{equation}
Now, the substitution $ x = \dfrac{2t}{1+t^2} $ leads to
\[
\frac{1}{M(1+x,1-x)} = \frac{1+t^2}{M((1+t)^2,(1-t)^2)} = \frac{1+t^2}{M(1+t^2,1-t^2)}
\]
giving the relation
\[
\sum_{k=0}^{\infty} A_k \left(\frac{2t}{1+t^2}\right)^{2k} = \sum_{k=0}^{\infty} A_k t^{4k}.
\]
Gauss determined the coefficients $ A_k $ from the above relation and thus he obtained the following infinite series development for $ M(1+x,1-x)^{-1} $
\begin{equation}\label{Eq_gauss_hypergometric}
y:= M(1+x,1-x)^{-1} = 1 + \left(\frac{1}{2}\right)^2 x^2 + \left(\frac{1.3}{2.4}\right)^2 x^4 + \left(\frac{1.3.5}{2.4.6}\right)^2 x^6 + \cdots \quad .
\end{equation}
Further it satisfies the following differential equation
\begin{equation}\label{Eq_gauss_differential}
(x^3-x)\, \frac{d^2y}{dx^2} + (3 x^2 -1 )\, \frac{dy}{dx} + xy =0.
\end{equation}
Gauss then noticed a remarkable connection between $ M(1+x,1-x) $ and the complete elliptic integral of the first kind defined as 
\begin{equation}
K(x) := \int_{0}^{\frac{\pi}{2}} \frac{1}{\sqrt{1 - x^2 \sin^2 \phi}} \, d\phi .
\end{equation} 
\begin{thm}{\textbf{(Gauss).}} \label{Thm_gauss}
	Assume $ |x| < 1 $. Then
	\[	 K(x) = {\frac{\pi}{2}}\, \frac{1}{ M(1+x,1-x)}. \]
\end{thm}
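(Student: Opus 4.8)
The plan is to follow Gauss's own idea and exhibit a two-parameter version of the elliptic integral that is \emph{invariant} under a single step of the arithmetic--geometric mean iteration; iterating then collapses it to a trivial integral whose value is immediate. For $a,b>0$ put
\[ I(a,b) := \int_{0}^{\pi/2} \frac{d\phi}{\sqrt{a^2\cos^2\phi + b^2\sin^2\phi}} . \]
Two elementary remarks get us going. First, since $1-x^2\sin^2\phi = \cos^2\phi + (1-x^2)\sin^2\phi$, for $|x|<1$ we have $K(x) = I\!\left(1,\sqrt{1-x^2}\right)$. Second, pulling a constant out of the square root gives the homogeneity $I(\alpha a,\alpha b) = \alpha^{-1}I(a,b)$, mirroring $M(\alpha a,\alpha b) = \alpha M(a,b)$, so it is enough to understand $I$ up to this common scaling.

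The heart of the matter is the invariance $I(a,b) = I\!\left(\tfrac{a+b}{2},\sqrt{ab}\right)$, which I would prove in two substitutions. The substitution $t = b\tan\phi$ brings $I$ to the rational form
\[ I(a,b) = \int_{0}^{\infty} \frac{dt}{\sqrt{(t^2+a^2)(t^2+b^2)}} . \]
Then Gauss's substitution $u = \tfrac12\!\left(t - \tfrac{ab}{t}\right)$ --- an increasing bijection of $t\in(0,\infty)$ onto $u\in(-\infty,\infty)$ --- transforms this integral into $I(a_1,b_1)$ with $a_1 = \tfrac{a+b}{2}$, $b_1 = \sqrt{ab}$, the computation resting on the algebraic identities $(t^2+a^2)(t^2+b^2) = 4t^2\!\left(u^2+a_1^2\right)$ and $u^2 + b_1^2 = \tfrac14\!\left(t+\tfrac{ab}{t}\right)^2$, so that
\[ \frac{dt}{\sqrt{(t^2+a^2)(t^2+b^2)}} = \frac12\,\frac{du}{\sqrt{(u^2+a_1^2)(u^2+b_1^2)}} , \]
and integrating over the full $u$-line --- whose two halves contribute equally, cancelling the factor $\tfrac12$ --- gives back $I(a_1,b_1)$. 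Checking these identities and keeping the Jacobian bookkeeping straight is the one genuinely fiddly point, and the step I expect to be the main, if modest, obstacle. (One could instead run the older substitution $\sin\phi = \frac{2a\sin\theta}{(a+b)+(a-b)\sin^2\theta}$ directly in $I(a,b)$, but the two-step route above keeps the algebra smaller.)

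With the invariance in hand the proof finishes by a limiting argument. Iterating gives $I(a,b) = I(a_n,b_n)$ for every $n$, where $\{a_n\}$, $\{b_n\}$ are Gauss's AGM sequences; since $a_n,b_n \to M(a,b)$ and the integrand depends continuously on the parameters as long as they stay bounded away from $0$ (which they are, being trapped between $b_1$ and $a_1$), we may let $n\to\infty$ to get
\[ I(a,b) = I\big(M(a,b),M(a,b)\big) = \int_{0}^{\pi/2} \frac{d\phi}{M(a,b)} = \frac{\pi}{2\,M(a,b)} . \]
Specializing to $a = 1+x$ and $b = 1-x$ with $|x|<1$: a single AGM step sends $(1+x,1-x)$ to $\left(1,\sqrt{1-x^2}\right)$, hence $M(1+x,1-x) = M\!\left(1,\sqrt{1-x^2}\right)$, and therefore
\[ K(x) = I\!\left(1,\sqrt{1-x^2}\right) = \frac{\pi}{2\,M\!\left(1,\sqrt{1-x^2}\right)} = \frac{\pi}{2\,M(1+x,1-x)} , \]
which is Theorem~\ref{Thm_gauss}. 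As an aside, the statement also drops out of a completely different computation: expanding $(1-x^2\sin^2\phi)^{-1/2}$ by the binomial series and integrating term by term with Wallis' formula $\int_0^{\pi/2}\sin^{2k}\phi\,d\phi = \tfrac{\pi}{2}\left(\tfrac{1\cdot 3\cdots(2k-1)}{2\cdot 4\cdots 2k}\right)$ gives $K(x) = \tfrac{\pi}{2}\,y$ with $y$ the series \eqref{Eq_gauss_hypergometric}, equivalently that $K$ solves the differential equation \eqref{Eq_gauss_differential} with $K(0) = \tfrac{\pi}{2}$ and $K'(0)=0$; but the AGM-invariance argument is the more illuminating one.
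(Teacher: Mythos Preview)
Your proof is correct and takes a genuinely different route from the paper's. The paper proves the theorem in one line by expanding $(1-x^2\sin^2\phi)^{-1/2}$ via the binomial series, integrating term by term (Wallis' formula), and recognising the resulting hypergeometric series as the expansion \eqref{Eq_gauss_hypergometric} for $M(1+x,1-x)^{-1}$ that was already established earlier in the text. In other words, the paper's proof leans entirely on the prior derivation of that power series, and what you relegate to a final aside \emph{is} the paper's argument. Your main line instead proves the AGM-invariance of the two-parameter integral $I(a,b)$ directly via Gauss's rational substitution, then passes to the limit. What each approach buys: the paper's route is extremely short \emph{given} the series identity \eqref{Eq_gauss_hypergometric}, but that identity itself required Gauss's functional-equation trick with $x=2t/(1+t^2)$; your route is self-contained, avoids any power-series manipulation, and makes the structural reason for the theorem --- that $K$ and $M$ satisfy the same AGM recursion --- completely transparent. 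It also generalises immediately to $I(a,b)=\pi/(2M(a,b))$ for arbitrary $a,b>0$, whereas the series argument is tied to the one-parameter normalisation.
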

\begin{proof}
	The result follows by expanding $ (1 - x^2 \sin^2 \phi)^{-\frac{1}{2}} $ using the binomial theorem, integrating the resulting series term by term and then making use of \eqref{Eq_gauss_hypergometric}.
\end{proof}

It is already amazing to see the creation of a mathematical theory with beautiful interconnections from a seemingly innocent looking idea of agm. However, this was just the beginning. In fact, the general hypergeometric series is defined as 
\begin{equation} \label{Eq_general_hyperbolic_series}
F(\alpha,\beta,\gamma,x) := 1 + \frac{\alpha \beta}{\gamma}\frac{x}{1!} + \frac{\alpha(\alpha +1) \beta(\beta+1)}{\gamma(\gamma+1)}\frac{x^2}{2!} + \cdots \quad , 
\end{equation} 
and it is a solution of the general hypergeometric differential equation defined by Gauss as 
\begin{equation}\label{Eq_gauass_general_hypergeo_differential}
x (1-x) \, \frac{d^2y}{d x^2} + \left[ \gamma - (\alpha + \beta +1)x\right] \, \frac{dy}{dx} - \alpha \beta y = 0.
\end{equation}
It is readily seen that on substituting $ z =x^2 $ the equation $ \eqref{Eq_gauss_differential} $ reduces to a special case of the general hypergeometric differential equation defined by $ \eqref{Eq_gauass_general_hypergeo_differential} $ with $ \alpha =\beta = \frac{1}{2} $ and $ \gamma =1 $.

The function $ M(1,x) $ is also connected with the well known Jacobi theta function. We define for $ q = e^{2 \pi i z} $ with $ \im(z) >0 $,
\begin{align*}
\theta_2(q) &=  2 \sum_{n=0}^{\infty} q^{(n+ \frac{1}{2})^2} \\
\theta_3(q)   &= \sum_{n=-\infty}^{\infty} q^{n^2} \\
\theta_4(q)  & = \sum_{n=-\infty}^{\infty} (-1)^n q^{n^2} 
\end{align*}

$ \theta_3(q) $ is a generating function for the number of ways of representing a number as a sum of squares. More precisely, 
\[
\theta_3^k(q) = \sum_{n \geq 0} r_k(n) q^n
\]
where, $$ r_k(n) = \# \{(x_1,\cdots x_k)\in \ZZ^k \ | \ x_1^2 + \cdots x_k^2 = n  \}, $$ i.e.,  $ r_k(n) $ denotes the number of ways of decomposing the integer $ n $ as a sum of $ k $ squares. 
It can be shown that
\begin{align}
\frac{\theta_3^2(q) + \theta_3^2(q)}{2}  =   \theta_3^2(q^2) \text{ and }    \sqrt{\theta_3^2(q)  \theta_4^2(q)} =\theta_4^2(q^2),
\end{align}	
from which it follows that 
\begin{align}
M(\theta_3^2(q),\theta_4^2(q)) = 1.
\end{align}
We note the following identity (refer page 467, \cite{Whittaker} )
\begin{align} \label{eq_theta}
\theta_2^4(q) + \theta_4^4(q)  = \theta_3^4(q) .
\end{align}
Let $$ x =  \frac{\theta_2^2(q) }{\theta_3^2(q) }.$$ Then we obtain
\begin{align*}
M(1+x,1-x) &= M\left(1 + \frac{\theta_2^2(q) }{\theta_3^2(q) },1-\frac{\theta_2^2(q) }{\theta_3^2(q) }\right) \\
& = \frac{1}{\theta_3^2(q)} M\left(\theta_3^2(q) + \theta_2^2(q) ,\theta_3^2(q) - \theta_2^2(q)\right) \\
& = \frac{1}{\theta_3^2(q)} M\left(\theta_3^2(q), \sqrt{\theta_3^4(q) - \theta_2^4(q) }\right) \\
&= \frac{1}{\theta_3^2(q)} M\left(\theta_3^2(q),\theta_4^2(q)\right) = \frac{1}{\theta_3^2(q)}. \\
\end{align*}
Then from Theorem \ref{Thm_gauss} and the above calculation it follows that
\begin{align} \label{Eq_theta_elliptic}
K(x) & = {\frac{\pi}{2}}\, \frac{1}{ M(1+x,1-x)} = \frac{1 }{2}\pi\theta_3^2(q).
\end{align}
One common underlying theme in the development of modern mathematics is solution of equations: polynomial equations and the symmetry of their roots resulted in Galois theory, a similar study for the roots of differential equations led to the development of Lie theory and the beautiful subject of algebraic geometry also has its roots in the study of roots of equations! The study of hypergeometric differential equation motivated and provided an impetus for development of a large body of important mathematics such as the theory of complex analysis and also of Riemann surfaces. The solutions of hypergeometric differential equations are messy multivalued functions. New methods and concepts such as that of analytic continuation were invented to systematically study and tame these beasts! 

\section{A Simple Pendulum and Number Theory}
In the previous section we discussed some of the important functions in number theory. Now we will see how the Jacobi theta function, that is related to the number of ways in which an integer could be decomposed as a sum of squares, magically appears in the equation describing the time period of a real pendulum. 
\begin{figure}
	\begin{center}
	\includegraphics[scale=0.5]{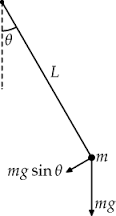}
	\caption{A simple pendulum. }
		\end{center}
\end{figure}
Suppose our simple pendulum consists of a point mass $ m $ attached to one end of a light rod of negligible mass and length $ L $. We assume that the other end of the rod is attached to a frictionless hinge about which the rod is free to swing. Let the rod makes an angle $ \theta_0 $ with the vertical at the highest point of its motion. Let $ T $ be the time period of the pendulum. The conservation of energy, when the angle is $ \theta $, gives
\begin{align*}
mg(L-L \cos \theta_0)  & = m g(L- L \cos \theta) + \frac{1}{2} m \left(L \frac{d \theta}{dt}\right)^2 \\
\implies \frac{d \theta}{dt} & = \sqrt{\frac{2g}{L} \left( \cos \theta - \cos \theta_0 \right)} = \sqrt{\frac{2g}{L} \left( 2 \sin^2 \left(\frac{\theta_0}{2}\right)- 2 \sin^2 \left(\frac{\theta}{2}\right) \right)} \\
\implies 2 \sqrt{\frac{g}{L}}\int_{0}^{\frac{T}{4}} \, dt & = \int_{0}^{\theta_0} \frac{d \theta}{\sqrt{  \sin^2 \left(\frac{\theta_0}{2}\right)-  \sin^2 \left(\frac{\theta}{2}\right)}}\\ &= \int_{0}^{\frac{\pi}{2}} \frac{2 }{\sqrt{  1 -  k^2 \sin^2 \phi }} \, d \phi\\
& = K(k).
\end{align*} 
Here in the second-last step we have changed the variable of integration using the substitution $$ \sin \phi = \frac{\sin \frac{\theta}{2}}{k} \,\, \text{with} \,\, k = \sin \frac{\theta_0}{2}.$$ 
Therefore we obtain the following expression for the time period of the simple pendulum 
\begin{align}
T = 4 \sqrt{\frac{L}{g}} \, K(k) = 2 \pi \sqrt{\frac{L}{g}} M(1+k,1-k)^{-1} = 2 \pi \sqrt{\frac{L}{g}}\,F(\frac{1}{2},\frac{1}{2},1,k) .
\end{align} 
We see the nice connection that the time period of the simple pendulum is given by a function that is related to the arithmetic and geometric means of numbers. From \eqref{Eq_theta_elliptic} it is clear that the time period $ T $ is also related to the Jacobi theta function.

\section{A modular connection}

Let us once again return to the elliptic integral $ K(x) $. These integrals also arose naturally in calculating the arc length of an arbitrary ellipse. Legendre studied these integrals extensively and Jacobi and Abel are credited for coming up with the idea of studying the inverse functions of the elliptic integrals instead.  The entries in Gauss' personal diaries reveal that he had already discovered most of Jacobi's and Abel's results but chose not to publish them. The other important idea was to study the complex valued functions instead of real valued functions. In this situation we can draw a parallel with the familiar circular trigonometric functions as follows. A circle with equation $ y^2 = 1 -x^2  $ is parametrized by $ x = \sin u $ and $ y = \sin^{\prime} u = \cos u $ and $ u =\sin^{-1} x $ is given by the familiar integral $ \int_{0}^{x} \dfrac{1}{\sqrt{1 -t^2}} \, dt $. On extending to the complex domain the $  \sin^{-1} x $ function defined by the above integral is a multivalued function. For example one can choose a path for integration from $ 0 $ to $ x $ on the complex plane such that it goes around an arbitrary number of times around the singularities $ \pm 1 $. However, if we study the inverse of the function defined by the above integral we get a nice single valued periodic function $ \sin x $. The periodicity of $ \sin x  $ is a manifestation of the multivalued nature of the integral defining  $ \sin^{-1} x  $. Similarly it turns out that the multivalued nature of elliptic integrals lead to inverse functions which are doubly periodic. 
\begin{figure}
	\begin{center}
		\includegraphics[scale=0.3]{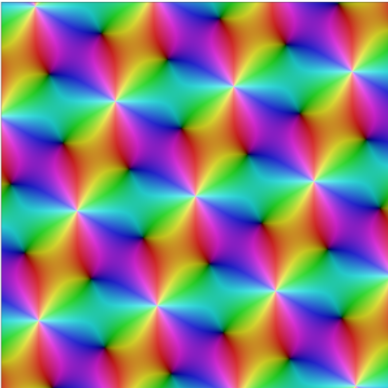}
		\caption{A doubly periodic pattern. }
	\end{center}
\end{figure}

After Gauss, Abel, and Jacobi, Weierstrass was the mathematician to make significant contributions in the study of elliptic integrals with his $ \displaystyle \wp (z) $ function.  In fact, similar to the example of the parametrization of a circle considered earlier, $ x =  \displaystyle \wp (z) $ and $ y =  \displaystyle \wp^{\prime} (z)$ parametrize the  elliptic curve $E(\CC): y^2 = 4 x^3 -g_2 x -g_3 $, with 
\[
z = \int_{x}^{\infty} \frac{dt}{\sqrt{4t^3 - g_2 t - g_3}}.
\]
The function $\displaystyle \wp (z)  $  has many remarkable properties. Suppose $ \Lambda  $ is a lattice, i.e., a subgroup of the form $ \Lambda = \ZZ \,
\omega_1 + \ZZ \, \omega_2 $ with $ \{\omega_1,\omega_2\} $ being an $ \RR $-basis for $ \CC $. A meromorphic function on $ \CC $ relative to the lattice $ \Lambda $ is called an elliptic function if $ f(z + \omega_i) = f(z) $ for all $ z \in \CC $ and $ i \in \{1,2\} $. It turns out that $ \displaystyle \wp (z) $ is an even elliptic function. In fact, there is another formulation of $ \displaystyle \wp (z) $ given below.
\[
\wp (z) = \frac{1}{z^2} + \sum_{\omega \in \Lambda - \{0\}} \left( \frac{1}{(z-\omega)^2} - \frac{1}{\omega^2}\right),
\]
with 
\[g_2 = 60 \Gfun_4(\Lambda), \quad g_3 = 140 \Gfun_6(\Lambda)\] and 
\begin{align}\label{Ch2_def_Eisenstein}
\Gfun_{2k}(\Lambda) := \sum_{\omega \in \Lambda - \{0\}}  \omega^{-2k}.
\end{align}
One can define a map 
\[
\phi: \CC / \Lambda \longrightarrow E(\CC) \subset \PP^2(\CC), z \longleftrightarrow [\wp (z),\wp^{\prime}(z),1]
\]
which is an isomorphism of Riemann surfaces that is also a group homomorphism. From the previous discussion it is clear that now we have a map from lattices to elliptic curves. Further,
\[
\CC / \Lambda \simeq  \CC / \Lambda^{\prime} \iff \Lambda = c \Lambda^{\prime} \quad \text{for some } c \in \CC. 
\]

Also on writing $ \Lambda(\tau) :=  \ZZ \cdot
\tau+ \ZZ \, \cdot 1  \simeq \ZZ \cdot\omega_1 + \ZZ \cdot \omega_2 $ with $ \tau = \frac{\omega_1 }{\omega_2} $ for some $ \tau \in \HH
$, we see that 
$\Lambda(\tau) = \Lambda(\tau^{\prime})  $ if and only if there exist a matrix $ \mat{a}{b}{c}{d}  \in \SL(2,\ZZ)$ such that 
\[
\tau^{\prime} = \frac{a \tau + b}{c \tau + d}.
\]
One can show that there is one-to-one correspondence between  $$ \SL(2,\ZZ) \backslash \HH  \leftrightarrow  \text{elliptic curves over } \CC / \simeq .$$

We now define a modular form of weight $ k $, with respect to a congruence subgroup $ \Gamma $ of  $ \SL(2,\ZZ)$, as a holomorphic function on the complex upper half plane $ \HH $, 

that satisfies the condition
\begin{equation}\label{Eq_modular_form}
f\left(\frac{az+b}{cz+d}\right) = (cz+d)^{k} f(z) \qquad \qquad \text{for }  \mat{a}{b}{c}{d} \in \Gamma, 
\end{equation}
and which is holomorphic at the cusp $ \infty $.
It can be checked that the Eisenstein series $ \Gfun_{2k}(\Lambda) $ defined in \eqref{Ch2_def_Eisenstein} is a modular form of weight $ 2k $. 

We have given a very brief historical account of the origin of modular forms, but many important connections are already beginning to show up. In fact, since their first appearance in connection with hypergeometric equations, modular forms have become a fertile meeting ground for various subfields of mathematics such as elliptic curves,  quadratic forms, quaternion algebras, Riemann surfaces, algebraic geometry, algebraic topology, to name but a few, with fruitful consequences. One such example is the celebrated modularity theorem (formerly Taniyama, Shimura, Weil conjecture), which states that every elliptic curve defined over $ \QQ $ is modular. The proof of a version of the modularity theorem by Wiles resulted in the resolution of Fermat's last theorem, the single most famous problem in number theory. The classical ideas of modular forms were further extended in a more general setting using representation theoretic tools by Gelfand,  Ilya Piatetski-Shapiro and others in the 1960s resulting in the modern theory of automorphism forms. Langlands created a general theory of Eisenstein series and produced some far reaching and deep conjectures. Automorphic forms play an important role in modern number theory. In this context the following remark of Langlands on automorphic forms comes to mind- ``It is a deeper subject than I appreciated and, I begin to suspect, deeper than anyone yet appreciates. To see it whole is certainly a daunting, for the moment even impossible, task.''. 

\subsection*{Summary}
There exist many surprising connections within mathematics and also between mathematics and our physical universe. We could only give glimpses of a few of such interesting results. Readers are welcome to explore and discover many more exciting connections!

\vfill\eject


\begin{thebibliography}{1}
\bibitem{Flajolet} Flajolet, Philippe, and Robert Sedgewick. Analytic Combinatorics. Cambridge University Press, 2009.
\bibitem{Whittaker} Whittaker, Edmund T., and George N. Watson. A Course of Modern Analysis. Cambridge University Press, 1996.
\end{thebibliography}
\end{document}